\newtheorem*{theorem*}{Theorem}
\newtheorem*{proposition*}{Proposition}
\newtheorem*{lemma*}{Lemma}
\newtheorem*{conjecture*}{Conjecture}
\newtheorem*{open problem*}{Open Problem}
\newtheorem*{problem*}{Problem}
\newtheorem{theorem}{Theorem}
\newtheorem{proposition}[theorem]{Proposition}
\theoremstyle{definition}\newtheorem{remark}[theorem]{Remark}
\theoremstyle{definition}
\newcommand\CC{{\mathbb C}}
\newcommand\RR{{\mathbb R}}
\newcommand\DD{{\mathbb D}}
\newcommand\TT{{\mathbb T}}
\newcommand\DDD{{\mathcal D}}
\newcommand\BB{{\mathbb B_n}}
\newcommand\SSS{{\mathbb S_n}}
\title[Non-cyclicity and polynomials]{Non-cyclicity and polynomials in Dirichlet-type spaces of the unit ball}
\author{Dimitrios Vavitsas}
\email{dimitris.vavitsas@doctoral.uj.edu.pl}
\author{Konstantinos Zarvalis}
\address{Department of Mathematics, Aristotle University of Thessaloniki, 54124, Thessaloniki, Greece}
\email{zarkonath@math.auth.gr}
\date{}
\thanks{Partially supported by NCN grant SONATA BIS no.  2017/26/E/ST1/00723 of the National Science Centre, Poland}
\keywords{Dirichlet-type space, unit ball, cyclic polynomial, zero set, Riesz $\alpha$-capacity}
\subjclass[2020]{Primary: 32A37, 47A16; Secondary: 31C15, 32A08, 32A60}
\begin{document}
\begin{abstract}
We give a description of the intersection of the zero set with the unit sphere of a zero-free polynomial in the unit ball of $\CC^n.$ This description leads to the formulation of a conjecture regarding the characterization of polynomials that are cyclic in Dirichlet-type spaces in the unit ball of $\CC^n$. Furthermore, we answer partially ascertaining whether an arbitrary polynomial is not cyclic. 
\end{abstract}

\maketitle

\section{Introduction}

Spaces of holomorphic functions either in the complex plane $\CC$ or in the setting of several complex variables have been studied extensively with regard to their various properties. One of the more standard procedures in this field is the investigation of cyclic for the shift operator vectors belonging in such spaces. To be precise, suppose that $\mathcal{H}$ is a space of holomorphic functions in a subset of $\CC^n$, $n\ge1$. A function $\phi\in\mathcal{H}$ is called a \textit{multiplier} of $\mathcal{H}$ if $\phi f\in\mathcal{H}$ for every $f\in\mathcal{H}$. Let $\mathcal{M}(\mathcal{H})$ denote the set of all multipliers of $\mathcal{H}$. Given $f\in\mathcal{H}$, we may construct its \textit{closed invariant subspace} $[f]_\mathcal{H}$ which is the closure of the set $\{\phi f:\phi\in\mathcal{M}(\mathcal{H})\}$. When $[f]_\mathcal{H}$ equals the whole space $\mathcal{H}$, we say that $f$ is \textit{cyclic} in $\mathcal{H}$.

The main problem in this field is providing a characterization of cyclicity. When restricting to spaces of holomorphic functions in one complex variable, there are remarkable results which provide the desired characterizations. For example, in the classical \textit{Hardy space} $H^2$ of the unit disk $\DD=\{z\in\CC:|z|<1\}$, it has been proved in \cite{Beurling} that a function $f\in H^2$ is cyclic if and only if it is \textit{outer}, meaning
$$\log|f(0)|=\frac{1}{2\pi}\int\limits_{0}^{2\pi}\log|f(e^{i\theta})|d\theta \quad\text{and}\quad f(0)\ne0.$$
For other well-known spaces such as the \textit{Bergman space}
$A^2$ and the \textit{Dirichlet space} $\DDD$ there exist some partial results, but not full characterizations. For a profound presentation of the rich theory of such spaces and various results concerning cyclicity, we refer the interested reader to \cite{Duren-Schuster} and \cite{Primer}.

Given that the situation remains mostly unclear in $\CC$, one can guess that even less is known in several complex variables. When dealing with spaces of holomorphic functions in $\CC^{n}$, the two principal reference domains are the \textit{polydisk} $\DD^n=\{(z_1,\dots,z_n)\in\CC^{n}:|z_1|<1,\dots,|z_n|<1\}$ and the \textit{unit ball} $\BB=\{(z_1,\dots,z_n)\in\CC^n:|z_1|^2+\dots+|z_n|^2=1\}$. The majority of the results concern the \textit{bidisk} $\DD^2$. Indeed, in \cite{Guo} the authors work with the Hardy space of the bidisk, while in \cite{Kosinski-Sola1} and \cite{Kosinski-Sola2}, the authors discuss the cyclicity of polynomials in Dirichlet-type spaces of the bidisk $\DD^2$. Moreover, results about the cyclicity of polynomials in Dirichlet-type spaces of the unit ball of $\CC^2$ were found in \cite{Vavitsas1}. In the setting of $\CC^n$, $n\ge3$, partial results about the Hardy space of the polydisk $\DD^n$ are obtained in \cite{Linus}, whereas non-cyclicity and cyclicity for special polynomials in the unit ball of $\CC^n$ is examined in \cite{Vavitsas2}. Optimal approximants of $1/f$ and connections with orthogonal polynomials in $\CC^n$ in certain weighted spaces are discussed in \cite{Sargent}, \cite{Sargent2}. Furthermore, recent advances about the case of the Drury-Arveson space $H_d^2$ may be found in \cite{Perfekt}, \cite{Chalmoukis} and \cite{Richter} (see \cite{Hartz} for more information on the Drury-Arveson space).

At this point, let us also note that the theory of spaces of holomorphic functions in the polydisk is quite different compared to the one in the unit ball. This is due to the topology of the two sets; they are not biholomorphic. For instance, the fixed parameters where an abritary polynomial is cyclic or not in the bidisk setting slightly differ from the two-dimentional ball due to the Shilov boundary of each domain; in particular $\textup{dim}_{\RR}(\TT^2)<\textup{dim}_{\RR}(\mathbb{S}_2)=\textup{dim}_{\RR}(\partial \DD^2)$. Nevertheless, at the same time, the two theories present similarities in terms of certain tools which may be utilized in both settings. In addition, on top of these two standard domains, one might further inquire whether it is possible to extend well known results to more general domains such as the pseudoconvex Reinchardt domains containing the origin.

\subsection{Dirichlet-type spaces}
Our objective is to give a sufficient condition for the non-cyclicity of polynomials in the Dirichlet-type spaces of the unit ball $\BB$ of $\CC^n$. Before getting to the formal statement of our results, we are going to need some terminology. For $z=(z_1,\dots,z_n)$ and $w=(w_1,\dots,w_n)\in\CC^n$ we denote by $\langle z,w\rangle=z_1\bar{w_1}+\dots+z_n\bar{w_n}$ the usual \textit{Euclidean inner product}. We use the notation $||z||=\sqrt{|z_1|^2+\dots+|z_n|^2}$ for the associated induced norm. So, for the unit ball we have $\BB=\{z\in\CC^n:||z||<1\}$ and for its boundary, the \textit{unit sphere}, we have $\SSS=\{z\in\CC^n:||z||=1\}$. Suppose that $f\in\textup{Hol}(\BB)$, where $\textup{Hol}(\BB)$ denotes the space of holomorphic functions in $\BB$. Then $f$ has a power series expansion of the form
$$f(z)=\sum\limits_{|k|=0}^{+\infty}a_kz^k, \quad\quad z=(z_1,\dots,z_n)\in\BB,$$
where $k=(k_1,\dots,k_n)$ is a multi-index of non-negative integers, $|k|=k_1+\dots+k_n$ and $z^k=z_1^{k_1}\cdots z_n^{k_n}$. For a fixed $\alpha\in\RR$, we say that $f$ belongs to the \textit{Dirichlet-type space} $D_\alpha(\BB)$ whenever
$$||f||^2_\alpha:=\sum\limits_{|k|=0}^{+\infty}(n+|k|)^\alpha\frac{(n-1)!k!}{(n-1+|k|)!}|a_k|^2<+\infty,$$
where $k!=k_1!\cdots k_n!$. Special cases of this family are all the classical Hilbert spaces of holomorphic functions in the unit ball of $\CC^n$. Indeed, $D_{-1}(\BB), D_0(\BB), D_{n-1}(\BB)$ and $D_n(\BB)$ coincide with the Bergman, Hardy, Drury-Arveson and Dirichlet spaces, respectively. For more information, we refer the interested reader to \cite{Zhu}.

\subsection{Cyclic vectors}
For $i=1,\dots,n$ consider the \textit{shift operator} $T_i:D_\alpha(\BB)\to D_\alpha(\BB)$ defined by $T_i:f(z)=f(z_1,\dots,z_i,\dots,z_n)\mapsto z_i\cdot f(z)$. A function $f\in D_\alpha(\BB)$ is called a \textit{cyclic vector} if the closed invariant subspace
$$[f]_\alpha:=\textrm{clos span}\{z_1^{k_1}\cdots z_n^{k_n}f:k_1,\dots,k_n=0,1,2,\dots\},$$
where the closure is taken with respect to the norm of $D_\alpha(\BB)$, coincides with the whole space $D_\alpha(\BB)$.

To attack the problem of cyclicity, we are going to study the \textit{zero set} of a polynomial $p\in\CC[z_1,\dots,z_n]$ which will be written from now on as
$$\mathcal{Z}(p):=\{z\in\CC^n:p(z)=0\}.$$
The points lying on $\mathcal{Z}(p)\cap\SSS$ are characterized as the \textit{boundary zeroes} of $p$. Authors concerning cyclic vectors in Dirichlet-type spaces make clear that the cyclicity of a polynomial is intrinsically linked with the real dimension of its zero set. This idea can be demonstrated through the following theorems: \cite[Theorem 4.8]{Perfekt}, \cite[Theorem p.8740]{Kosinski-Sola1}, \cite[Corollary p.289]{Brown-Shields}, \cite[Theorem 1]{Kosinski-Sola2}, \cite[Theorem 3]{Vavitsas1}.

\subsection{Main results}
We will exclusively focus on the polynomials $p\in\CC[z_1,\dots,z_n]$ that are zero-free in $\BB$. This is because if $p$ has a zero inside the unit ball, then it cannot be cyclic for any Dirichlet-type space.

Following the trend of all the aforementioned results, we are going to see that there is an inextricable correlation between a polynomial of $\CC[z_1,\dots,z_n]$ and the nature of its boundary zeroes. Towards this goal, our first task is to provide a geometric description for the boundary zeroes of such polynomials. To do this, we are going to utilize tools stemming from semi-algebraic geometry and function theory in the unit ball.

\begin{theorem}\label{zeroset}
    Let $p\in\CC[z_1,\dots,z_n]$ be a zero-free in $\BB$ polynomial. If $\mathcal{Z}(p)\cap\SSS$ is non-empty, then it is either a finite set, or $\mathcal{Z}(p)\cap\SSS=\cup M_i;$ a finite disjoint union of Nash submanifolds $M_i\subset\RR^{2n}$, where each $M_i$ is Nash diffeomorphic to an open hypercube $(-1,1)^m$, $m=0,1,\dots,n-1$. In particular, each Nash diffeomorphism $\phi_i:(-1,1)^m\rightarrow M_i$ is complex-tangential and real analytic.
\end{theorem}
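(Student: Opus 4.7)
The approach combines a standard semi-algebraic cell decomposition of $\mathcal{Z}(p)\cap\SSS$ with an analytic extension argument that exploits the zero-freeness of $p$ on $\BB$.

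The set $\mathcal{Z}(p)\cap\SSS$ is semi-algebraic in $\RR^{2n}$, being cut out by the polynomial conditions $\operatorname{Re}p=\operatorname{Im}p=0$ and $\|z\|^2=1$. The Nash cell decomposition theorem from real algebraic geometry (see Bochnak--Coste--Roy) yields a finite disjoint partition $\mathcal{Z}(p)\cap\SSS=\bigsqcup_i M_i$, where each $M_i$ is a Nash submanifold of $\RR^{2n}$ admitting a Nash (hence real-analytic) diffeomorphism $\phi_i\colon(-1,1)^{m_i}\to M_i$. The case in which $\mathcal{Z}(p)\cap\SSS$ is finite corresponds to $m_i=0$ for every $i$. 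It therefore remains to establish the dimension bound $m_i\leq n-1$ and the complex-tangentiality of each $\phi_i$.

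For complex-tangentiality, fix a positive-dimensional cell $M_i$ and a real-analytic curve $\gamma\colon(-\epsilon,\epsilon)\to M_i$, and extend $\gamma$ to a holomorphic map $\tilde\gamma\colon D\to\CC^n$ on a complex neighbourhood $D$ of $(-\epsilon,\epsilon)$. Since $p\circ\tilde\gamma$ is holomorphic on $D$ and vanishes on the real axis, it vanishes on all of $D$; hence $\tilde\gamma(D)\subset\mathcal{Z}(p)$, and the zero-freeness of $p$ in $\BB$ forces $\|\tilde\gamma(\lambda)\|^2\geq 1$ throughout $D$. Writing $\lambda=t+is$ and Taylor expanding in $s$ at fixed $t$ gives
\[
\|\tilde\gamma(t+is)\|^2-1=-2s\,\operatorname{Im}\langle\gamma'(t),\gamma(t)\rangle+O(s^2).
\]
Non-negativity for both signs of $s$ forces the linear coefficient to vanish, so $\operatorname{Im}\langle\gamma'(t),\gamma(t)\rangle=0$. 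Combined with the identity $\operatorname{Re}\langle\gamma'(t),\gamma(t)\rangle=0$ obtained by differentiating $\|\gamma(t)\|^2=1$, one concludes $\langle\gamma'(t),\gamma(t)\rangle=0$; that is, $\gamma'(t)$ lies in the complex tangent space to $\SSS$ at $\gamma(t)$. As $\gamma$ was arbitrary, $T_zM_i\subset H_z\SSS$ for every $z\in M_i$, which is precisely the complex-tangential property of $\phi_i$.

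For the dimension bound, observe that if the CR tangent bundle $TM_i\cap JTM_i$ had positive rank at some point, then by real-analyticity the rank would be constant on a dense open subset of $M_i$, and Frobenius-type integrability for real-analytic CR structures would produce a non-trivial complex-analytic subvariety contained in $\SSS$, contradicting the strict pseudoconvexity of $\SSS$. Hence $M_i$ must be totally real, and a totally real real-linear subspace of the complex $(n-1)$-dimensional space $H_z\SSS$ has real dimension at most $n-1$, which gives $m_i\leq n-1$. The main obstacle is the analytic extension step in the second paragraph; the initial decomposition is a direct citation from real algebraic geometry, and the totally real count in the last paragraph is standard once complex-tangentiality is in hand.
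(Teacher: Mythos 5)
Your cell decomposition via Bochnak--Coste--Roy is exactly what the paper does, and the observation that Nash maps over $\RR$ are real analytic matches as well. The interesting divergence is in how you establish complex-tangentiality and the dimension bound.

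Your complex-tangentiality argument is correct and is genuinely different from the paper's. The paper proceeds function-theoretically: $\mathcal{Z}(p)\cap\SSS$ is a (Z)-set for $A(\BB)$ because $p$ is zero-free in $\BB$; compact subsets are again (Z)-sets, hence (PI)-sets by Rudin's Theorem 10.1.2; and Rudin's Theorems 10.5.4 and 11.2.5 force any $C^1$ curve into a (PI)-set to be complex tangential. You instead complexify a real-analytic curve $\gamma\subset M_i$, use $p\circ\tilde\gamma\equiv 0$ together with zero-freeness in $\BB$ to get $\|\tilde\gamma\|^2\ge 1$, and read off $\operatorname{Im}\langle\gamma',\gamma\rangle=0$ from the first-order Taylor coefficient in $\operatorname{Im}\lambda$; combined with $\operatorname{Re}\langle\gamma',\gamma\rangle=0$ from $\|\gamma\|\equiv 1$ this gives complex-tangentiality. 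This is more elementary and self-contained than invoking the peak-interpolation machinery, and it exploits the zero-freeness of $p$ more directly. It does require $\gamma$ real analytic, but that is harmless: the $\phi_i$ are real analytic, so curves of the form $t\mapsto\phi_i(x+th)$ already realize every tangent direction at every point of $M_i$.

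The dimension bound, however, has a genuine gap. The paper simply cites Rudin's Theorem 10.5.6, which directly gives $\dim M_i\le n-1$ once $\phi_i$ is $C^1$, non-singular and complex tangential. You attempt to re-derive this by claiming that a positive-rank CR tangent bundle $TM_i\cap JTM_i$ on a real-analytic $M_i$ would, by ``Frobenius-type integrability,'' force a complex-analytic subvariety inside $\SSS$. This implication is false: a real-analytic CR manifold of positive CR dimension need not contain any complex curve (the sphere $\mathbb{S}_2\subset\CC^2$ itself has CR dimension one and contains none); CR integrability in the Andreotti--Hill/Tomassini sense means local embeddability, not foliation by complex leaves, and Frobenius integrability of $HM_i$ would require the Levi form of $M_i$ to vanish, which you have not established. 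The correct direct route is the isotropy argument: complex-tangentiality means the contact form $\alpha$ of $\SSS$ pulls back to zero on $M_i$, hence so does $d\alpha$, so $T_zM_i$ is an isotropic subspace of $(H_z\SSS,d\alpha)$; since $d\alpha$ is non-degenerate on $H_z\SSS\cong\RR^{2(n-1)}$ (strict pseudoconvexity), $\dim_\RR T_zM_i\le n-1$, and moreover no complex line can lie in $T_zM_i$ because $d\alpha(v,Jv)>0$ for $v\ne 0$, so $M_i$ is in fact totally real. You should either replace the Frobenius claim with this Levi-form computation or simply cite Rudin's Theorem 10.5.6 as the paper does.
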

More information on complex tangential functions, Nash submanifolds and Nash diffeomorphisms follows in the next section.

The situation described in the preceding theorem about the shape of the zero set is actually verified by the corresponding zero set of the so-called \textit{model polynomials}. For $m\in\{1,2,\dots,n\}$, we say that $\pi_m(z_1,\dots,z_n)=1-m^{m/2}z_1\cdots z_m$ are the model polynomials of the Dirichlet-type space $D_\alpha(\BB)$. It can be readily checked that the intersection of their zero sets with the unit ball are the sets
$$\left\{\frac{1}{\sqrt{m}}\left(e^{i\theta_1},\dots,e^{i\theta_{m-1}},e^{-i(\theta_1+\dots+\theta_{m-1})},0,\dots,0\right):\theta_j\in[0,2\pi)\right\}.$$
We get at once that $\textup{dim}_{\RR}(\mathcal{Z}(\pi_m)\cap\SSS)=m-1$ if we consider it as a subset of $\RR^{2n}$. But by \cite{Vavitsas2} we know that $\pi_m$ is cyclic in $D_\alpha(\BB)$ if and only if $\alpha\le\frac{2n-(m-1)}{2}$; the fixed parameters involve real dimensions of the boundary zeroes. Note that the cyclicity of the model polynomials in the two dimensional ball was examined in \cite{Sola}. Based on that work followed the characterization of cyclicity of an arbitrary polynomial in \cite{Vavitsas1}.

\begin{figure}
    \centering
    \includegraphics[scale=0.4]{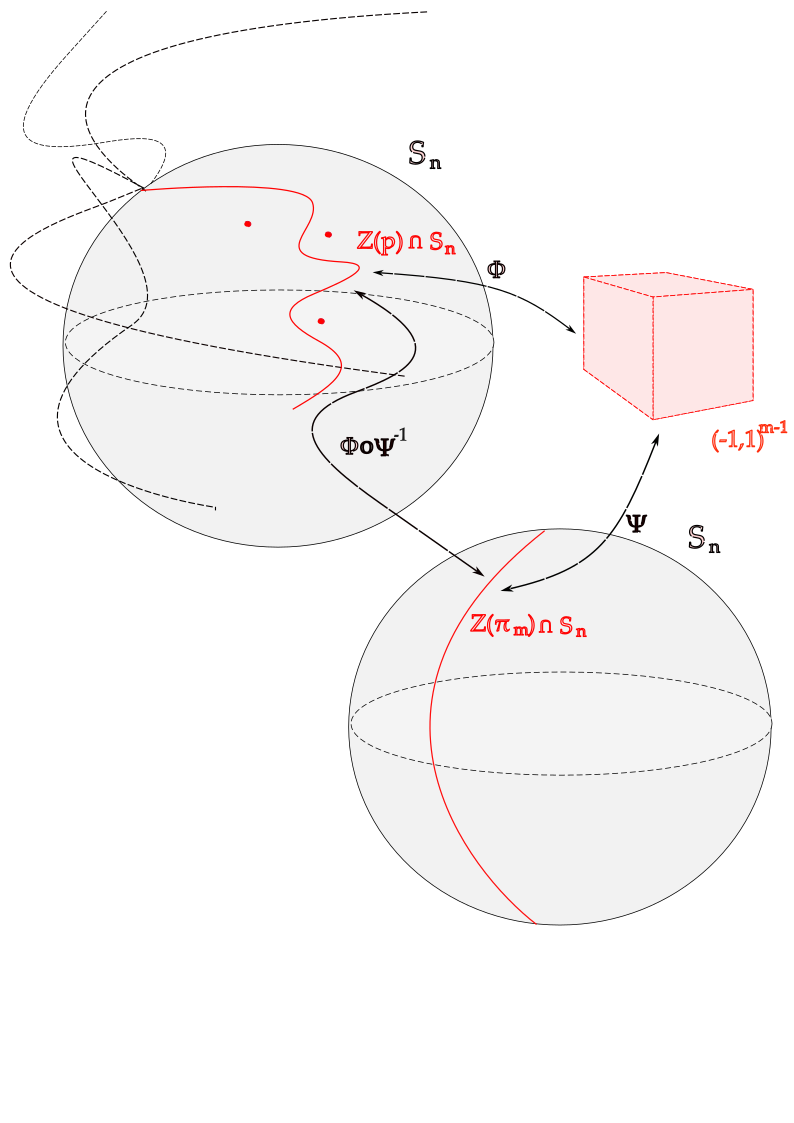}
    \caption{Illustration of the Conjecture}
    \label{figure}
\end{figure}

The combination of these results naturally leads to the following conjecture:

\begin{conjecture*}\label{conjecture}
    Let $p\in\CC[z_1,\dots,z_n]$ be a zero-free in $\BB$ polynomial. Suppose that $\mathcal{Z}(p)\cap\SSS$ contains a real submanifold of $\RR^{2n}$ of dimension $m-1$, $m=2,3,\dots,n$, but no submanifold of any higher dimension. Then $p$ is cyclic in $D_\alpha(\BB)$ if and only if $\alpha\le\frac{2n-(m-1)}{2}$.
\end{conjecture*}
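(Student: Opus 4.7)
The plan is to prove both implications of the conjectured equivalence by leveraging the geometric structure of $\mathcal{Z}(p)\cap\SSS$ supplied by Theorem~\ref{zeroset} and transporting the analysis of the model polynomials $\pi_m$ from \cite{Vavitsas2} to the general setting.

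For the implication ``$\alpha>(2n-(m-1))/2\Rightarrow p$ is not cyclic'', I would argue by capacity. By Theorem~\ref{zeroset} the set $\mathcal{Z}(p)\cap\SSS$ contains a Nash submanifold $M$ of real dimension $m-1$, parametrised by a complex-tangential Nash diffeomorphism $\phi\colon(-1,1)^{m-1}\to M$. Introduce the nonisotropic Riesz $\alpha$-capacity on $\SSS$ built from the Koranyi pseudo-distance; under this capacity a real $k$-dimensional complex-tangential submanifold behaves like an isotropic real $2k$-dimensional set, so the critical index at which $M$ first acquires positive capacity is precisely $2\alpha = 2n-(m-1)$. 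Hence the hypothesis forces $M$ to have positive capacity. The argument is then completed in the classical Brown--Shields style: functions in $D_\alpha(\BB)$ admit boundary values off sets of zero capacity, and cyclicity of $p$ would force the constant $1$ to be approximable by multiples of $p$, whose boundary traces all vanish on $M$ except on a capacity-null set. Since the constant $1$ does not vanish anywhere, this would contradict positive capacity of $M$, so $p$ cannot be cyclic.

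For the converse implication ``$\alpha\le(2n-(m-1))/2\Rightarrow p$ is cyclic'', the plan is to construct explicit approximants in the spirit of \cite{Vavitsas2}. Consider the logarithmic family
\[
\phi_\varepsilon(z)=\frac{\log\bigl(1/(p(z)+\varepsilon q(z))\bigr)}{\log(1/\varepsilon)},\qquad\varepsilon\in(0,1),
\]
with $q$ chosen so that $p+\varepsilon q$ is bounded away from zero on $\dBB$. The claim to establish is $\|1-p\phi_\varepsilon\|_\alpha\to 0$. The finite decomposition $\mathcal{Z}(p)\cap\SSS=\bigcup M_i$ from Theorem~\ref{zeroset}, together with the complex-tangentiality of the parametrising diffeomorphisms, reduces each local contribution to a weighted integral over an open hypercube $(-1,1)^{m_i-1}$ of exactly the type controlled in the model-polynomial analysis. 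At the critical threshold $\alpha=(2n-(m-1))/2$ these integrals diverge only logarithmically in $\varepsilon$, and that divergence is absorbed by the $\log(1/\varepsilon)$ denominator, yielding the desired convergence.

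The principal obstacle is the sufficient direction, and more precisely the passage from local to global. Dirichlet-type norms in $\BB$ are not strictly local, so patching the estimates around the distinct components $M_i$ requires a multiplier-adapted decomposition rather than a mere smooth partition of unity. A further subtlety is that $p$ may vanish with higher multiplicity along some $M_i$, or the components may meet along lower-dimensional Nash strata, forcing a more refined choice of the auxiliary polynomial $q$ and possibly of the approximating family itself; if the logarithmic recipe proves insufficient at the critical $\alpha$, one would have to resort to the comparatively underdeveloped theory of optimal approximants in several variables. The non-cyclicity direction, by contrast, appears to be within reach once the correct nonisotropic capacity is identified and matched with the geometric output of Theorem~\ref{zeroset}.
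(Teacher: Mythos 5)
The statement you were asked to prove is labelled a \emph{Conjecture} in the paper, and the paper does not prove it in full. What the authors actually establish is only one implication: Theorem~\ref{non-cyclicity} shows that $p$ is not cyclic in $D_\alpha(\BB)$ whenever $\alpha>\frac{2n-(m-1)}{2}$. The sufficiency direction ($\alpha\le\frac{2n-(m-1)}{2}\Rightarrow p$ cyclic) is left open. So any ``proof'' of the biconditional is asking for more than the paper delivers, and you correctly sensed this.

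For the non-cyclicity half, your plan is essentially the paper's. You identify a complex-tangential Nash submanifold $M$ of dimension $m-1$ inside $\mathcal{Z}(p)\cap\SSS$ (Theorem~\ref{zeroset}) and argue via Riesz $\alpha$-capacity that $M$ acquires positive capacity once $\alpha$ passes the threshold, then invoke the standard capacity-based non-cyclicity criterion (the paper cites \cite[Theorem 16]{Vavitsas2}). The piece you gloss over, and which the paper treats with care, is the transfer of the capacity estimate from the explicitly computable model set $\mathcal{Z}(\pi_m)\cap\SSS$ to the abstract $M$: this is exactly Proposition~\ref{proposition}, which shows that positivity of $\textup{cap}_\alpha$ is invariant under complex-tangential Nash diffeomorphisms. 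Your heuristic that ``a real $(m-1)$-dimensional complex-tangential submanifold behaves like an isotropic real $2(m-1)$-dimensional set'' under the Koranyi metric is morally the content of that proposition -- the quadratic comparison $\frac{1}{C}\|\theta-\theta'\|^2\le|1-\langle\Psi(\theta),\Psi(\theta')\rangle|\le C\|\theta-\theta'\|^2$ -- but as written it is an assertion, not a proof, and the upper bound in that comparison is genuinely nontrivial (it requires the vanishing of first-order Taylor terms coming from complex tangentiality).

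For the cyclicity half, you are candid that the logarithmic-approximant recipe is a plan rather than a proof and that the local-to-global patching of Dirichlet norms and the behaviour at the critical $\alpha$ are unresolved. That candour is appropriate: this direction is precisely what remains conjectural, and the paper offers no argument for it either. In short, your write-up correctly reproduces (in outline) the one direction the paper actually proves and correctly flags the other direction as the open obstacle; just be aware that the capacity-invariance step you take for granted is where the paper's real technical work lies.
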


Note that if $\mathcal{Z}(p)\cap\SSS$ consists of finitely many points, then we may argue exactly as in \cite[Section 3]{Vavitsas1}; so we omit these cases from the Conjecture.

Even though an actual illustration of the hypercube and the unit ball of $\CC^n$ is not possible, we can always imagine it as a 3-dimensional object. See Figure \ref{figure} for a depiction of the Conjecture.

The second main result of this present article is a potential theoretic result concerning \textit{Riesz} $\alpha$\textit{-capacity} on the unit sphere $\SSS$ which we will denote by $\textup{cap}_\alpha(\cdot)$ (more information follows in the sequel). This result will provide a sense of invariance for Riesz $\alpha$-capacity under complex tangential Nash diffeomorphisms.

\begin{proposition}\label{proposition}
    Let $M,N\subset\SSS$ be Nash submanifolds such that there exist complex tangential Nash diffeomorphisms $\Phi:(-1,1)^m\to M$, $\Psi:(-1,1)^m\to N$, $m=1,2,\dots,n-1$. Fix $\alpha\in(0,n]$. Then, $\textup{cap}_\alpha(M)>0$ if and only if $\textup{cap}_\alpha(N)>0$.
\end{proposition}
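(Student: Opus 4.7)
The strategy is to push the capacity question on $M$ and $N$ down to a common Euclidean Riesz capacity on the hypercube $(-1,1)^m$ via the parametrizations $\Phi$ and $\Psi$. The key analytic point is that complex tangentiality on $\SSS$ is precisely the condition that forces the anisotropic kernel $|1-\langle z,w\rangle|^{-\alpha}$ to behave like the isotropic Euclidean kernel $\|z-w\|^{-2\alpha}$, which erases any information about $M$ or $N$ beyond the hypercube.

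The first step is to establish the two-sided estimate
\[
c_K\|s-t\|^2 \leq |1-\langle\Phi(s),\Phi(t)\rangle| \leq C_K\|s-t\|^2, \qquad s,t\in K,
\]
for every compact $K\subset(-1,1)^m$, and similarly for $\Psi$. The lower bound follows from the spherical identity $1-\mathrm{Re}\langle z,w\rangle = \tfrac{1}{2}\|z-w\|^2$ combined with the bi-Lipschitz character of the Nash diffeomorphism $\Phi$ on compact subsets. For the upper bound, write
\[
|1-\langle\Phi(s),\Phi(t)\rangle|^2 = \bigl(1-\mathrm{Re}\langle\Phi(s),\Phi(t)\rangle\bigr)^2 + \bigl(\mathrm{Im}\langle\Phi(s),\Phi(t)\rangle\bigr)^2;
\]
the first summand is $O(\|s-t\|^4)$ by the same identity, and the real-analytic function $h(s,t)=\mathrm{Im}\langle\Phi(s),\Phi(t)\rangle$ is antisymmetric, vanishes on the diagonal, and has first-order Taylor coefficient $\mathrm{Im}\langle\partial_{s_j}\Phi(t),\Phi(t)\rangle = 0$ at $s=t$ because complex tangentiality of $\Phi$ exactly means $\langle\partial_{s_j}\Phi(t),\Phi(t)\rangle = 0$. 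Hence $h(s,t)=O(\|s-t\|^2)$ and the upper bound follows.

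With this comparison, for any positive Borel measure $\mu$ supported on a compact subset of $M$, the pullback $\tilde\mu = (\Phi^{-1})_\ast\mu$ on the hypercube satisfies
\[
\int_M\int_M \frac{d\mu(z)\,d\mu(w)}{|1-\langle z,w\rangle|^\alpha} \;\asymp\; \iint \frac{d\tilde\mu(s)\,d\tilde\mu(t)}{\|s-t\|^{2\alpha}},
\]
with constants depending only on the compact support, and the analogous relation holds for $N$ via $\Psi$. Combining with the standard characterization of positive capacity by the existence of a nontrivial positive measure of finite $\alpha$-energy (exhausting $M$ and $N$ by compact subsets if necessary), we obtain the chain
\[
\textup{cap}_\alpha(M)>0 \iff \textup{cap}^{\mathrm{Eucl}}_{2\alpha}(K_0)>0 \text{ for some compact } K_0\subset(-1,1)^m \iff \textup{cap}_\alpha(N)>0,
\]
where the middle condition is intrinsic to the hypercube and makes no reference to $M$ or $N$.

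The main obstacle is the upper half of the geometric estimate. Along non-complex-tangential directions on $\SSS$ one generally has $|1-\langle z,w\rangle| \asymp \|z-w\|$ instead of $\|z-w\|^2$; this would produce a Euclidean $\alpha$-kernel rather than a $2\alpha$-kernel and would completely destroy the invariance, since the critical exponent on $(-1,1)^m$ would change. It is thus crucial to exploit the complex tangentiality of $\Phi$ and $\Psi$, and the vanishing of $\langle\partial_{s_j}\Phi(t),\Phi(t)\rangle$ is the decisive algebraic input. A minor technical point, handled by standard exhaustion, is that $(-1,1)^m$ is open rather than compact; positivity of Riesz capacity on $M$ localizes to a compact subset, which transfers cleanly through $\Psi\circ\Phi^{-1}$.
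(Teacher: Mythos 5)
Your proposal is correct and follows the same overall route as the paper: both hinge on establishing the two-sided local comparison $|1-\langle\Phi(s),\Phi(t)\rangle|\asymp\|s-t\|^2$ (and likewise for $\Psi$) and then transferring the $\alpha$-energy integral through the diffeomorphisms. The genuine differences are in how the crucial upper bound is obtained and in how the transfer is organized. For the upper bound, the paper first applies a unitary rotation to normalize $\theta'=0$ and $\Psi(0)=(1,0,\dots,0)$, reducing the problem to the single coordinate $\Psi_1$, and then runs a power-series argument with explicit Cauchy-type coefficient bounds and a geometric-series tail estimate. You instead split $1-\langle\Phi(s),\Phi(t)\rangle$ into real and imaginary parts, observe that the real part is automatically $O(\|s-t\|^2)$ via $1-\mathrm{Re}\langle z,w\rangle=\tfrac12\|z-w\|^2$ with no tangentiality needed, and apply Taylor's theorem to $h(s,t)=\mathrm{Im}\langle\Phi(s),\Phi(t)\rangle$, which vanishes on the diagonal and has vanishing first derivative there precisely because $\langle\partial_{s_j}\Phi(t),\Phi(t)\rangle=0$. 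This is cleaner: it isolates exactly where complex tangentiality enters (only the imaginary part) and avoids both the unitary reduction and the power-series bookkeeping, while using nothing beyond $C^2$ smoothness of $\Phi$. For the transfer, you route through an intermediate Euclidean Riesz capacity on $(-1,1)^m$, whereas the paper pushes a measure directly from $M'$ to $N'$ via $\Psi\circ\Phi^{-1}$; this difference is cosmetic. One small slip to fix: the anisotropic energy kernel is $K_\alpha(t)=t^{\alpha-n}$ for $\alpha<n$ (and $\log(e/t)$ for $\alpha=n$), not $t^{-\alpha}$, so your intermediate Euclidean kernel should be $\|s-t\|^{-2(n-\alpha)}$ rather than $\|s-t\|^{-2\alpha}$, and the $\alpha=n$ logarithmic case deserves a one-line remark (the two-sided comparison still gives comparable energies up to an additive constant). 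None of this affects the validity of the argument.
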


Finally, through Proposition \ref{proposition} we are able to proceed to a partially affirmative answer to the Conjecture.

\begin{theorem}\label{non-cyclicity}
    Let $p\in\CC[z_1,\dots,z_n]$ be a zero-free in $\BB$ polynomial. Suppose that $\mathcal{Z}(p)\cap\SSS$ contains a real submanifold of $\RR^{2n}$ of dimension $m-1$, $m=2,3,\dots,n$, but no submanifold of any higher dimension. Then $p$ is not cyclic in $D_\alpha(\BB)$ whenever $\alpha>\frac{2n-(m-1)}{2}$.
\end{theorem}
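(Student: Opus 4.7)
The plan is to show that $\mathcal{Z}(p)\cap\SSS$ has positive Riesz $\alpha$-capacity for every $\alpha$ in the stated range with $\alpha\le n$, and then to invoke the standard capacitary obstruction---a Brown--Shields type criterion for $D_\alpha(\BB)$---which forces the boundary zero set of any cyclic vector to have zero $\alpha$-capacity. The positivity is obtained by comparing $\mathcal{Z}(p)\cap\SSS$ with the zero set of the model polynomial $\pi_m$ via Proposition \ref{proposition}. For $\alpha>n$, non-cyclicity is immediate from the embedding $D_\alpha(\BB)\subset C(\dBB)$, since any boundary zero obstructs cyclicity of a continuous function.

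By the hypothesis and Theorem \ref{zeroset}, the $(m-1)$-dimensional real submanifold inside $\mathcal{Z}(p)\cap\SSS$ can be taken to be one of the Nash pieces $M_i$, equipped with a complex-tangential, real analytic Nash diffeomorphism $\phi_i\colon(-1,1)^{m-1}\to M_i$. The canonical reference against which $M_i$ is compared is a chart of the zero set of the model polynomial $\pi_m(z)=1-m^{m/2}z_1\cdots z_m$: the explicit parametrization
$$\psi(\theta_1,\dots,\theta_{m-1})=\tfrac{1}{\sqrt{m}}\bigl(e^{i\theta_1},\dots,e^{i\theta_{m-1}},e^{-i(\theta_1+\dots+\theta_{m-1})},0,\dots,0\bigr)$$
satisfies $\langle\partial_j\psi,\psi\rangle\equiv 0$ for every $j$, hence is complex-tangential. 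Restricting to $(0,2\pi)^{m-1}$ and precomposing with a Nash diffeomorphism onto $(-1,1)^{m-1}$ yields a complex-tangential Nash diffeomorphism $\widetilde{\psi}\colon(-1,1)^{m-1}\to N$ onto a full-dimensional open Nash submanifold $N\subset\mathcal{Z}(\pi_m)\cap\SSS$.

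By \cite{Vavitsas2}, $\textup{cap}_\alpha(\mathcal{Z}(\pi_m)\cap\SSS)>0$ for $\alpha>\frac{2n-(m-1)}{2}$, witnessed by a positive measure of finite $\alpha$-energy. Since this set is covered by finitely many charts of the form $N$ (differing by unitary automorphisms of $\SSS$, which preserve capacity), countable subadditivity gives $\textup{cap}_\alpha(N)>0$. Proposition \ref{proposition} applied to $\phi_i$ and $\widetilde{\psi}$ then transfers positivity to $M_i$, so $\textup{cap}_\alpha(M_i)>0$ and a fortiori $\textup{cap}_\alpha(\mathcal{Z}(p)\cap\SSS)>0$, forbidding the cyclicity of $p$ in $D_\alpha(\BB)$.

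The chief technical hurdle is verifying the capacitary obstruction itself, namely that cyclicity of $f\in D_\alpha(\BB)$ implies $\textup{cap}_\alpha(\mathcal{Z}(f)\cap\SSS)=0$ in the precise normalization of $\textup{cap}_\alpha$ used here. The classical Brown--Shields argument should adapt: pair a sequence $\{pg_k\}$ converging to $1$ in $D_\alpha(\BB)$ against a nonnegative measure $\mu$ of finite $\alpha$-energy supported on $\mathcal{Z}(p)\cap\SSS$, and push the evaluations through to reach the contradiction $\mu(\SSS)=0$ versus $\mu(\SSS)>0$. Matching this argument to the anisotropic geometry of $\SSS$ and the specific definition of $\textup{cap}_\alpha$ adopted in the paper is where the care is needed.
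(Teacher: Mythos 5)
Your proposal follows the same route as the paper: identify an $(m-1)$-dimensional Nash chart inside $\mathcal{Z}(p)\cap\SSS$ via Theorem \ref{zeroset}, compare it through Proposition \ref{proposition} to a chart of $\mathcal{Z}(\pi_m)\cap\SSS$ (whose positive $\alpha$-capacity is imported from \cite{Vavitsas2}), and then invoke the capacitary obstruction to cyclicity. The step you flag as ``the chief technical hurdle'' -- that a cyclic $f\in D_\alpha(\BB)$ must satisfy $\textup{cap}_\alpha(\mathcal{Z}(f)\cap\SSS)=0$ -- is exactly what the paper handles by citing \cite[Theorem~16]{Vavitsas2}, so no new Brown--Shields argument needs to be carried out; once you cite that result your proof closes.

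Two small points are worth noting. First, you are more careful than the paper in parametrizing $\mathcal{Z}(\pi_m)\cap\SSS$: since this set is a torus, a single Nash diffeomorphism from $(-1,1)^{m-1}$ cannot cover it, and your restriction to an open dense chart followed by the subadditivity/unitary-invariance argument is the correct fix (the paper's phrase ``$\Phi:(-1,1)^{m-1}\to\mathcal{Z}(\pi_m)\cap\SSS$'' is slightly loose on this point). Second, your separate treatment of $\alpha>n$ via the inclusion of $D_\alpha(\BB)$ into $C(\overline{\BB}_n)$ is a genuine observation, because Riesz $\alpha$-capacity is only defined for $\alpha\in(0,n]$ in this paper and Proposition \ref{proposition} is stated only in that range; the paper leaves this case implicit in its citation, but handling it directly as you do is cleaner.
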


The structure of the article is as follows. First, in Section 2 we will prove Theorem \ref{zeroset} providing beforehand all the background from function theory and semi-algebraic geometry that is deemed necessary. Next, in Section 3 we are going to briefly present Riesz $\alpha$-capacity whose notion plays a pivotal role during the proofs of Proposition \ref{proposition} and Theorem \ref{non-cyclicity}.

\section{Description of boundary zeroes}
\subsection{Function theory in the unit ball}
Before delving into the main body of the proof, we need some information and results about function theory in the unit ball. We mostly follow \cite{Rudin}.

Even though we work with polynomials which are defined in the whole $\CC^n$, in general, a function $f\in D_\alpha(\BB)$ may not be well-defined in the unit sphere. Thus, boundary sets require a more delicate approach. Let $K\subset\SSS$ be compact and consider the algebra $A(\BB):=\textup{Hol}(\BB)\cap C(\SSS)$ of all functions holomorphic in the unit ball and continuous on the unit sphere.
\begin{enumerate}
    \item[(i)] $K$ is a \textit{(Z)-set} (\textit{zero set}) for $A(\BB)$ if there exists a function $f\in A(\BB)$ such that $f(\zeta)=0$ on $K$, but $f(z)\ne0$, for all $z\in\overline{\BB}\setminus K$.
    \item[(ii)] $K$ is a \textit{(PI)-set} (\textit{peak-interpolation set}) for $A(\BB)$ if the following property is satisfied: to each $g\in C(K)$ that is not identically zero corresponds some $f\in A(\BB)$ such that $f(\zeta)=g(\zeta)$ on $K$ and $|f(z)|<\max_{\zeta\in K}|g(\zeta)|$, for all $z\in\overline{\BB}\setminus K$.
\end{enumerate}
\begin{remark}\label{equivalence-Z-PI}
By \cite[Theorem 10.1.2]{Rudin} we know that $K$ is a (Z)-set for $A(\BB)$ if and only if it is a (PI)-set for  $A(\BB)$. In particular, both these properties are hereditary. This signifies that if $K$ is a (Z)-set (or (PI)-set) for $A(\BB)$, then every compact subset $F$ of $K$ must also be a (Z)-set (or (PI)-set) for $A(\BB)$.
\end{remark}

Next, we need to recall some definitions regarding complex tangentiality. Let $\Omega\subset\RR^m$, $m\in\mathbb{N}$, be an open set and let $\Phi:\Omega\to\SSS$ be a $C^1$ map. Denote by $J_\Phi(x)$ the Jacobian of $\Phi$ evaluated at some point $x\in\Omega$. Then, we say that $\Phi$ is \textit{complex tangential} if the orthogonality relation 
$$\langle J_\Phi(x)\cdot h,\Phi(x)\rangle=0$$
holds for all $x\in\Omega$ and all $h\in\RR^m$, where $J_\Phi(x)\cdot h$ can be thought of as matrix multiplication. The actual definition of complex tangential functions has to do complex tangent spaces and requires Fr\'{e}chet derivatives for infinite dimensions, but for the purposes of this present work, the aforementioned analytical counterpart will suffice. In the case of $C^1$ curves the definition is more straightforward. Let $I\subset\RR$ be an interval on the real line. Then, a $C^1$ curve $\gamma:I\to\SSS$ is said to be a \textit{complex tangential curve} when
$$\langle\gamma'(t),\gamma(t)\rangle=0, \quad\quad\textup{for all }t\in I.$$
\begin{remark}\label{equivalence-complex-tangential}
    It can be proved (see e.g. \cite[$\S$ 10.5.2]{Rudin} that a $C^1$ map $\Phi:\Omega\to\SSS$, where $\Omega\subset\RR^m$ is open, is complex tangential if and only if for any $C^1$ curve $\gamma:[0,1]\to\Omega$, the map $\Phi\circ\gamma:[0,1]\to\SSS$ is complex tangential.
\end{remark}

In addition, for a $\Phi$ as above, we associate to each $x\in\Omega$ the real vector space
$$V_x:=\{J_\Phi(x)\cdot h: h\in\RR^m\}.$$
We understand that $\Phi$ is complex tangential if for each $x\in\Omega$, $\Phi(x)$ is orthogonal to the associated space $V_x$. Furthermore, we characterize $\Phi$ as \textit{non-singular} if the rank of its Jacobian equals $m$ for every $x\in\Omega$.

Last but not least, we know that for a set $E\subset\RR^n$, its \textit{dimension} may be defined as
$$\textup{dim}(E)=\max\{\textup{dim}(\Gamma):\Gamma\subset E, \Gamma \text{ submanifold of }\RR^n\}.$$
Concerning sets of complex vectors, any $M\subset\CC^n$ may be regarded as a set $E\subset\RR^{2n}$. Through this correspondence, we define the \textit{real dimension} of $M$ as $\textup{dim}_\RR(M)=\textup{dim}(E)\in\{-1,0,1,\dots,2n\}$. The case $\textup{dim}(E)=-1$ is devoted to the instance when $E=\emptyset$. 

Combining everything, the following theorem will be crucial in order to estimate the real dimension of the boundary zeroes of a polynomial that is zero-free in the unit ball.
\begin{theorem}{\cite[Theorem 10.5.6]{Rudin}}\label{theo:dimension}
    Let $\Omega\subset\RR^m$, $m\in\mathbb{N}$, be open and suppose that $\Phi:\Omega\to\SSS$ is $C^1$, non-singular and complex tangential. Then $\textup{dim}_\RR(V_x)=m\le n-1$, for all $x\in\Omega$.
\end{theorem}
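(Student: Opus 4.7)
The equality $\dim_\RR V_x=m$ is immediate: non-singularity means $J_\Phi(x)\colon\RR^m\to\CC^n$ has rank $m$, hence is injective, so $V_x$ is its $m$-dimensional image. The substantive claim is the bound $m\le n-1$. My plan is to show that $V_x$ is a \emph{totally real} real subspace of the complex hyperplane $W:=\{v\in\CC^n:\langle v,\Phi(x)\rangle=0\}$. Complex tangentiality yields $V_x\subset W$, and $\dim_\CC W=n-1$; once I know $V_x\cap iV_x=\{0\}$, a standard linear-algebra argument (any real basis of $V_x$ remains complex linearly independent in $W$) gives $m=\dim_\RR V_x\le\dim_\CC W=n-1$.

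For the total reality, I will prove the stronger statement that $V_x$ is isotropic for the real skew form $\omega(u,v):=\operatorname{Im}\langle u,v\rangle$. This suffices: if some nonzero $v\in V_x$ satisfied $iv\in V_x$, then $\omega(v,iv)=\operatorname{Im}(-i\|v\|^2)=-\|v\|^2\ne0$, contradicting $\omega|_{V_x\times V_x}\equiv 0$. To establish isotropy, fix $h_1,h_2\in\RR^m$ and set $F(s,t):=\Phi(x+sh_1+th_2)$, a map from a neighborhood of the origin in $\RR^2$ into $\SSS$. Complex tangentiality applied throughout this neighborhood gives the two identities $\langle\partial_s F,F\rangle\equiv 0$ and $\langle\partial_t F,F\rangle\equiv 0$. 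Differentiating the first in $t$ and the second in $s$ cancels the mixed second-derivative term upon subtraction and leaves $\langle\partial_s F,\partial_t F\rangle-\langle\partial_t F,\partial_s F\rangle=0$, i.e.\ $2i\operatorname{Im}\langle\partial_s F,\partial_t F\rangle=0$; evaluating at $(0,0)$ yields $\operatorname{Im}\langle J_\Phi(x)h_1,J_\Phi(x)h_2\rangle=0$, which is exactly the isotropy of $V_x$.

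The main technical obstacle is the regularity gap: the cross-differentiation step formally requires $C^2$, while the hypothesis is only $C^1$. The standard remedy is mollification: approximate $\Phi$ by smooth maps $\Phi_\epsilon$, re-project onto $\SSS$ by radial normalization, and check that the normalized approximants remain complex tangential modulo an error vanishing in the limit, so that the isotropy identity proved for the smoothings passes to the limit. One can also recast the cross-differentiation as a difference-quotient identity that uses only first derivatives, integrating $\langle\partial_t F,F\rangle=0$ in $s$ and differentiating the result in $t$; either approach yields isotropy under the stated $C^1$ hypothesis, and the chain ``$V_x\subset W$, totally real, $\dim_\CC W=n-1$'' then delivers $m\le n-1$.
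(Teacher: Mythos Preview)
The paper does not supply its own proof of this statement; it is quoted from Rudin's monograph and invoked as a black box in the proof of Theorem~\ref{zeroset}. There is therefore no in-paper argument to compare your proposal against.

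That said, your argument is correct and is essentially the classical one: complex tangentiality places $V_x$ inside the complex hyperplane $\Phi(x)^\perp$, and cross-differentiation of the identities $\langle\partial_j\Phi,\Phi\rangle=0$ shows $V_x$ is isotropic for $\operatorname{Im}\langle\cdot,\cdot\rangle$, hence totally real, whence $\dim_\RR V_x\le\dim_\CC\Phi(x)^\perp=n-1$. Your flagging of the $C^1$ versus $C^2$ issue is accurate, and the mollification sketch is workable but a bit heavy. A cleaner way to close the gap, avoiding approximation entirely, is to observe that complex tangentiality says the $1$-form $\alpha:=\sum_k\overline{\Phi_k}\,d\Phi_k$ vanishes identically on $\Omega$; hence its distributional exterior derivative vanishes, and since $d^2=0$ holds distributionally one gets $0=d\alpha=\sum_k d\overline{\Phi_k}\wedge d\Phi_k$. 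The right-hand side is a continuous $2$-form when $\Phi\in C^1$, so it vanishes pointwise, and evaluating it on a pair $(h_1,h_2)$ yields exactly $-2i\operatorname{Im}\langle J_\Phi(x)h_1,J_\Phi(x)h_2\rangle=0$. Note also that in the paper's actual use of this theorem the maps $\phi_i$ are Nash diffeomorphisms and hence real analytic, so the regularity question is moot there.
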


\subsection{Semi-algebraic geometry}
We now turn to some tools from semi-algebraic geometry which we are going to need in order to successfully describe the boundary zero set of a polynomial. For more information, we refer the interested reader to the books \cite{Algebraic}, \cite{Loja}, and the article \cite{Denkowska}.

Fix $N\in\mathbb{N}$. A set $A\subset\RR^N$ is said to be \textit{semi-algebraic} if for any $x\in\mathbb{R}^N$, there exist a neighborhood $U=U(x)$ and a finite number of polynomials $f_i$, $g_{ij}$, $i=1,\dots,p$, $j=1,\dots,q$, $p,q\in\mathbb{N}$, such that
$$A\cap U=\bigcup\limits_{j=1}^{q}\bigcap\limits_{i=1}^{p}\{x\in U:f_i(x)=0, g_{ij}(x)>0\}.$$
Moreover, the set $A$ is said to be \textit{algebraic} if
$$A\cap U=\{x\in U: f_i(x)=0\}.$$

Given a polynomial $p\in\CC[z_1,\dots,z_n]$, the definition above dictates that $\mathcal{Z}(p)\cap\SSS$ is an algebraic set, as it is the intersection of the unit sphere $\SSS=\{z\in\CC^n: 1-||z||^2=0\}$ and the two algebraic sets $\{z\in\CC^n: \textup{Im}p(z)=0\}$, $\{z\in\CC^n: \textup{Re}p(z)=0\}$. Therefore, the set of the boundary zeroes of a polynomial may be regarded as an algebraic subset of $\RR^{2n}$.

To continue with, we are in need of certain notions about Nash submanifolds and Nash diffeomorphisms; see \cite[Chapter 2]{Algebraic}.
\begin{enumerate}
    \item[(i)] Let $A\subset\RR^M$ and $B\subset\RR^N$ be two semi-algebraic sets. A mapping $f:A\to B$ is characterized as \textit{semi-algebraic} if its graph is a semi-algebraic set of $\RR^{M+N}$.
    \item[(ii)] Let $A\subset\RR^N$ be semi-algebraic. A semi-algebraic function of class $C^\infty$ $f:A\to\RR$ is called a \textit{Nash function}. Moreover, given two semi-algebraic sets $A,B\subset\RR^N$, a semi-algebraic bijection of class $C^\infty$ $f:A\to B$ is called a \textit{Nash diffeomorphism}.
    \item[(iii)] A semi-algebraic subset $M$ of $\RR^N$ is said to be a \textit{Nash submanifold} of $\RR^N$ of dimension $d$ if for every point $x\in M$, there exists a Nash diffeomorphism $\phi$ from an open semi-algebraic neighborhood $\Omega$ of the origin in $\RR^N$ onto an open semi-algebraic neighborhood $\Omega'$ of $x$ in $\RR^N$ such that $\phi(0)=x$ and $\phi((\RR^d\times\{0\})\cap\Omega)=M\cap\Omega'$.
    \item[(iv)] Two Nash submanifolds $M$ and $N$ are considered to be \textit{Nash diffeomorphic} if there exists a bijection $f:M\to N$ such that both $f$ and $f^{-1}$ are Nash functions.
\end{enumerate}
All the definitions can be extended for any real closed field in the place of $\RR$. We choose to write everything in terms of $\RR$ to better suit our current work.

Before proving our first main result, we need a very useful proposition with regard to Nash submanifolds. This last proposition will aid us to a large extend in the pursuit of describing the set of boundary zeroes of a polynomial.

\begin{proposition}{\cite[Proposition 2.9.10]{Algebraic}}\label{prop:union}
    Let $A\subset\RR^N$ be a semi-algebraic set. Then $A$ is the disjoint union of finitely many Nash submanifolds $M_i$, each Nash diffeomorphic to an open hypercube $(-1,1)^{\textup{dim}(M_i)}$.
\end{proposition}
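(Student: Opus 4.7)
The plan is to proceed by induction on the ambient dimension $N$, invoking cylindrical algebraic decomposition (CAD) as the fundamental structural tool. For the base case $N=1$, any semi-algebraic subset of $\RR$ is a finite disjoint union of singletons and open intervals; singletons are $0$-dimensional hypercubes, while any open interval, bounded or unbounded, admits an explicit Nash diffeomorphism onto $(-1,1)$ --- an affine rational map in the bounded case, and $t\mapsto (t-c)/\sqrt{1+(t-c)^2}$ (suitably restricted) in the unbounded case. Both maps are semi-algebraic and $C^\infty$, hence Nash, with Nash inverse.

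For the inductive step, assume the result in $\RR^{N-1}$ and let $A\subset\RR^N$ be semi-algebraic. The Cell Decomposition Theorem from \cite{Algebraic} supplies a finite partition of $\RR^{N-1}$ into semi-algebraic cells $B_j$, together with continuous Nash functions $\xi_{j,1}<\cdots<\xi_{j,k_j}$ on each $B_j$, such that $A$ is a finite disjoint union of strata of one of three forms: graphs $\{(x,\xi_{j,i}(x)):x\in B_j\}$, bounded bands $\{(x,t):x\in B_j,\ \xi_{j,i}(x)<t<\xi_{j,i+1}(x)\}$, or the two unbounded bands lying below $\xi_{j,1}$ and above $\xi_{j,k_j}$. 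A graph stratum is Nash diffeomorphic to $B_j$ via the projection $(x,\xi_{j,i}(x))\mapsto x$; a bounded band is Nash diffeomorphic to $B_j\times(-1,1)$ via
\[
(x,t)\longmapsto\left(x,\,\frac{2t-\xi_{j,i}(x)-\xi_{j,i+1}(x)}{\xi_{j,i+1}(x)-\xi_{j,i}(x)}\right);
\]
an unbounded band is treated similarly using $t\mapsto (t-\xi_{j,i}(x))/\sqrt{1+(t-\xi_{j,i}(x))^2}$. Applying the inductive hypothesis to each $B_j\subset\RR^{N-1}$ furnishes a Nash diffeomorphism $B_j\to(-1,1)^{\textup{dim}(B_j)}$, and composing delivers the required Nash diffeomorphism from each stratum of $A$ onto a hypercube of the appropriate dimension $\textup{dim}(B_j)$ or $\textup{dim}(B_j)+1$. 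Disjointness of the $M_i$ is automatic from the disjointness of the underlying CAD cells, and the Nash submanifold condition is inherited from the Nash diffeomorphism onto an open hypercube.

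The main obstacle is the Cell Decomposition Theorem itself, which is the substantial input from semi-algebraic geometry; it rests on the Tarski--Seidenberg projection theorem together with a delicate analysis showing that the real roots of a parametrized family of polynomials vary in a semi-algebraically continuous, and in fact piecewise Nash, manner over a suitable partition of the parameter space. Once that machinery is granted, the argument above reduces to writing down the explicit rational and algebraic diffeomorphisms displayed and to bookkeeping dimensions through the induction, which is why this statement can be (and is) cited directly from \cite{Algebraic}.
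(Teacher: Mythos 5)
The paper cites Proposition 2.9.10 verbatim from Bochnak--Coste--Roy and supplies no proof of its own, so your task amounts to reconstructing the reference's argument. Your overall strategy --- induction on the ambient dimension via a Nash cell decomposition, with explicit Nash diffeomorphisms for graphs and for bounded and unbounded bands --- is indeed the route taken there, and your base case and band formulas are correct.

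There is, however, a genuine gap in the inductive step. You write that applying the inductive hypothesis to each $B_j\subset\RR^{N-1}$ ``furnishes a Nash diffeomorphism $B_j\to(-1,1)^{\dim(B_j)}$.'' That is not what the inductive hypothesis gives: it asserts only that $B_j$ is a \emph{finite disjoint union} of Nash submanifolds each diffeomorphic to a hypercube, not that $B_j$ itself is such a submanifold, and in general it need not be (for instance $B_j$ may be disconnected). The step must be repaired either by (i) using the inductive hypothesis to refine each $B_j$ into finitely many pieces $B_{j,\ell}$, each Nash diffeomorphic to a hypercube, and then forming the graphs and bands over the $B_{j,\ell}$ rather than over $B_j$ --- the $\xi_{j,i}$ restrict to Nash functions on each piece, so this gives a valid finer partition of $A$ --- or by (ii) building the cylindrical decomposition \emph{recursively}, so that each cell of $\RR^{N-1}$ is by construction already a graph or band over a cell of $\RR^{N-2}$ and hence a hypercube automatically, which is in fact how the cited reference organizes the proof and makes the extra appeal to the inductive hypothesis unnecessary. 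A secondary, smaller point: the basic cell decomposition theorem yields only \emph{continuous} semi-algebraic $\xi_{j,i}$; upgrading them to Nash functions requires arranging the decomposition to avoid the loci where real roots of the defining polynomials collide, a refinement you acknowledge only in your closing paragraph and which should be flagged explicitly as part of what must be granted.
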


\subsection{Proof of Theorem \ref{zeroset}}
Having all the tools and results of the previous two subsections in hand, we are now ready to explicitly study the nature of the boundary zeroes of a polynomial that is zero-free in the unit ball.

\begin{proof}[Proof of Theorem \ref{zeroset}]
    Let $p\in\CC[z_1,\dots,z_n]$ be a polynomial satisfying the assumptions. Applying Proposition \ref{prop:union} to the semi-algebraic with respect to $\RR$ set $\mathcal{Z}(p)\cap\SSS$, we see that $\mathcal{Z}(p)\cap\SSS$ can be written as the disjoint union of finitely many Nash sumbanifolds $M_i$, while each $M_i$ is a semi-algebraic set, as well. In particular, for each index $i$ there exists a Nash diffeomorphism $\phi_i:(-1,1)^{\textup{dim}(M_i)}\to M_i$. However, since the closed field we are working on is $\RR$, by \cite[Chapter 8]{Algebraic} we know that each $\phi_i$ is actually real analytic. Obviously, in case $\textup{dim}(M_i)=0,$ for all $i$, and thus $M_i$ is a point, then $\mathcal{Z}(p)\cap\SSS$ is a finite set. On the contrary, suppose that there exists some index $i$ so that $\textup{dim}(M_i)=1,2,\dots,2n-2$ (it cannot be equal to $2n-1$, otherwise $p$ would be the zero polynomial, something that contradicts the hypothesis). Furthermore, consider a $C^1$ curve $\gamma_i:[0,1]\to(-1,1)^{\textup{dim}(M_i)}$. Then, the mapping $\phi_i\circ\gamma_i:[0,1]\to\mathcal{Z}(p)\cap\SSS$ is $C^1$, while the set $\phi_i\circ\gamma_i([0,1])$ is a compact subset of the (Z)-set $\mathcal{Z}(p)\cap\SSS$. As a consequence, $\phi_i\circ\gamma_i([0,1])$ is also a (Z)-set. Equivalently, by Remark \ref{equivalence-Z-PI} $\phi_i\circ\gamma_i([0,1])$ is a (PI)-set. It follows that $\phi_i\circ\gamma_i$ is a complex tangential curve; see \cite[Theorems 10.5.4, 11.2.5]{Rudin}. Since the choice of the curve $\gamma_i$ was arbitrary, we can infer through Remark \ref{equivalence-complex-tangential} that $\phi_i$ is also complex tangential. Finally, as a diffeomorphism, $\phi_i$ is non-singular, as well. Combining everything, Theorem \ref{theo:dimension} dictates that $\textup{dim}(M_i)\le n-1$ and we have the desired result.
\end{proof}

\section{Non-cyclicity of polynomials}
\subsection{Potential theory}
The proofs of our last two main results necessitate the use of arguments concerning potential theory and more specifically Riesz $\alpha$-capacity in the unit ball.  We start with a brief introduction of the subject. For more information we refer to \cite{Cohn}, \cite[Chapter 2]{Primer} and \cite{Pestana}.

For $\zeta,\eta\in\SSS$, we define their \textit{anisotropic distance} $d(\zeta,\eta)$ in $\SSS$ through the formula
$$d(\zeta,\eta)=|1-\langle\zeta,\eta\rangle|^{\frac{1}{2}}.$$
An important property of the anisotropic distance is that it remains invariant under composition with unitary matrices. As a matter of fact, given a unitary matrix $U$, we have $d(\zeta,\eta)=d(U\cdot\zeta,U\cdot\eta)$, for all $\zeta,\eta\in\SSS$, where again $U\cdot\zeta$ and $U\cdot\eta$ can be thought of as matrix multiplications.

For $\alpha\in(0,n]$ consider the non-negative \textit{kernel} $K_\alpha:(0,+\infty)\to[0,+\infty)$ given by 
$$K_\alpha(t)=
\begin{cases}
    t^{\alpha-n}, \quad\quad\hspace{0.1cm}\text{for }\alpha<n\\
    \log\left(\frac{e}{t}\right), \quad\text{for }\alpha=n.
\end{cases}$$
Let $\mu$ be any Borel probability measure supported on some compact Borel subset $E$ of $\SSS$. Then, the \textit{Riesz} $\alpha$\textit{-energy} of $\mu$ with respect to the anisotropic distance is defined to be the integral
$$I_\alpha[\mu]:=\iint\limits_{\SSS}K_\alpha(|1-\langle\zeta,\eta\rangle|)d\mu(\zeta)d\mu(\eta).$$
In this way, we may define the \textit{Riesz} $\alpha$\textit{-capacity} of $E\subset\SSS$ with respect to the anisotropic distance as the infimum
$$\textup{cap}_\alpha(E):=\left(\inf\{I_\alpha[\mu]:\mu\in \mathcal{P}(E)\}\right)^{-1},$$
where $\mathcal{P}(E)$ denotes the set of all Borel probability measures supported on $E$. If $E$ is any Borel subset of $\SSS$, then the Riesz $\alpha$-capacity of $E$ may be defined as $\textup{cap}_\alpha(E)=\sup\{\textup{cap}_\alpha(K):K\subset E, K \textup{ compact}\}$.

From this definition, we understand that $\textup{cap}_\alpha(E)=0$ if and only if $I_\alpha[\mu]=+\infty$ for all $\mu\in\mathcal{P}(E)$. On the other hand, if we find a $\mu\in\mathcal{P}(E)$ such that $I_\alpha[\mu]<+\infty$, then $\textup{cap}_\alpha(E)>0$.

\subsection{Proof of Proposition \ref{proposition}}
Before proceeding to the proof of Theorem \ref{non-cyclicity}, we will first prove a crucial proposition. In particular, we extend ideas appearing in \cite[Theorem 4.8]{Perfekt}, \cite[Theorem 21]{Vavitsas1} and \cite[Lemma 15]{Vavitsas2}; we show that the positivity of the Riesz $\alpha$-capacity of a submanifold of the unit sphere is invariant under complex tangential Nash diffeomorphisms. Through this next proposition, Theorem \ref{non-cyclicity} will follow effortlessly.

From now on, for $\theta=(\theta_1,\theta_2,\dots,\theta_m)\in\RR^m$ and $\epsilon>0,$ we denote $\Delta(\theta,\epsilon)=(\theta_1-\epsilon,\theta_1+\epsilon)\times(\theta_2-\epsilon,\theta_2+\epsilon)\times\dots\times(\theta_m-\epsilon,\theta_m+\epsilon).$ Also, $\Delta(0,\epsilon)$ is defined with respect to the origin $0\in \RR^m.$

Moreover, to simplify the notation, all different positive constants that appear in estimations below and that do note depend on $\theta,\theta',$ will be denoted by $C > 0.$

\begin{proof}[Proof of Proposition \ref{proposition}]
    First of all, following a similar argument as in the proof of Theorem \ref{zeroset}, $\Phi$ and $\Psi$ are actually real analytic diffeomorphisms. Given $\alpha\in(0,n]$, assume that $\textup{cap}_\alpha(M)>0$. Fix $\delta\in(-1,1)$. Then, the set $M':=\Phi(\Delta(0,\delta))$ is relative open in $M$ and as a result $\textup{cap}_\alpha(M')>0$, as well. Therefore, there exists a Borel probability measure $\mu$ supported on $M'$ such that $I_\alpha[\mu]<+\infty$. Consider $N':=\Psi(\Delta(0,\delta))$. Through $\mu$, pullback measures and pushforward measures, we may construct a Borel probability measure $\nu$ supported on $N'$. Indeed, for any measurable set $E$ in $N'$ define
    $$\nu(E)=\mu\left(\Phi|_{\Delta(0,\delta)}\circ\Psi^{-1}|_{N'}(E)\right).$$

    Our first aim in the proof is to show that there exists constant $C>0$ such that
    $$\frac{1}{C}||\theta-\theta'||^2\le|1-\langle\Psi(\theta),\Psi(\theta')\rangle|\le C||\theta-\theta'||^2,$$ 
    for all $\theta,\theta'$ in a proper open subset of $\Delta(0,1)$. This will eventually allow us to estimate $I_\alpha[\nu]$ and correlate it to $I_\alpha[\mu]$. 
    
    The left-hand side inequality is easier. As a matter of fact, since $\Psi$ is a diffeomorphism, we can find $C>0$ such that $||\Psi(\theta)-\Psi(\theta')||\ge C||\theta-\theta'||$, for all $\theta,\theta'\in\Delta(0,1)$. Then, we may observe that
    \begin{eqnarray}\label{lefthand}
     \notag   |1-\langle\Psi(\theta),\Psi(\theta')\rangle|&\ge&\textup{Re}(1-\langle\Psi(\theta),\Psi(\theta')\rangle)\\
     \notag   &=&\frac{||\Psi(\theta)-\Psi(\theta')||^2}{2}\\
        &\ge&C||\theta-\theta'||^2.
    \end{eqnarray}
    
    Nevertheless, the right-hand side inequality about the upper bound is more involved and requires careful handling. Suppose that $\Psi(\theta):=(\Psi_1(\theta),...,\Psi_n(\theta)),$ $\theta\in \Delta(0,1).$ Then there exists $r>0$ such that each function $\Psi_j$ has a power series expansion in $\Delta(0,r).$ Complexify each $\Phi_j$ and pick $\epsilon>0$ that does not depend on $\theta,\theta',$ so that
    $$\Psi_j(\theta):=\sum_{|k|=0}^{+\infty}a^j_k(\theta')(\theta-\theta')^k, \quad \theta'\in \Delta(0,\epsilon),\theta \in \Delta(\theta',\epsilon),$$
    $j=1,...,n.$ In addition, by the properly chosen $\epsilon$ and the complexification, for each $\theta' \in \Delta(0,\epsilon),$ the following estimation is true:
    $$|a_k^j(\theta')|=\Big|\frac{\partial^k\Psi_j(\theta')}{k!}\Big|\leq \frac{C}{\epsilon^{|k|}}.$$
    For a detailed background on arguments concerning real analytic and holomorphic functions of several variables we refer to \cite{Krantz,Laurent}.
    
    Now we turn to the quantity $|1-\langle\Psi(\theta),\Psi(\theta')\rangle|$. Since the anisotropic distance is invariant under compositions with unitary matrices, we may assume that $\theta'=0$ and $\Psi(0)=(1,0,\dots,0)$. Therefore, $|1-\langle\Psi(\theta),\Psi(0)\rangle|=|1-\Psi_1(\theta)|$ and the only thing left is to classify  and estimate the term $1-\Psi_1(\theta)$, for $\theta\in\Delta(0,\epsilon)$. In particular, we shall show that $|1-\Psi_1(\theta)|=O(||\theta||^2),$ $\theta$ close to the origin. Note that
    $$\Psi_1(\theta)=1+\sum\limits_{|k|=1}^{+\infty}a_k^1\theta^k, \quad\text{for all }\theta\in\Delta(0,\epsilon),$$
    where $a_k^1:=a_k^1(0)$ satisfy $|a_k^1|\le\frac{C}{\epsilon^{|k|}}.$ Furthemore, by our hypothesis $\Psi$ is complex tangential. Recall that complex tangentiality is defined through the same inner product we use in the anisotropic distance. As a consequence, the notion of complex tangentiality is invariant under compositions with unitary matrices, as well. So
    $$\langle J_\Psi(\theta)\cdot h,\Psi(\theta)\rangle=0, \quad\text{for all }h\in\RR^m,\theta\in\Delta(0,\epsilon).$$
    Applying this relation consecutively on the vectors $h_1=(1,0,\dots,0)$, $h_2=(0,1,\dots,0)$, $\dots$, $h_m=(0,0,\dots,1)$ and executing the necessary matrix multiplications along with certain straightforward calculations, we infer that $\frac{\partial\Psi_1(0)}{\partial\theta_j}=0$, for all $j=1,2,\dots,m$. This leads to $a_k^1=0$ whenever $|k|=1$ and hence
    $$\Psi_1(\theta)=1+\sum\limits_{|k|=2}^{+\infty}a_k^1\theta^k, \quad\text{for all }\theta\in\Delta(0,\epsilon).$$
    As a result
    \begin{eqnarray}\label{two sums}
      \notag |1-\langle\Psi(\theta),\Psi(0)\rangle|&=&|1-\Psi_1(\theta)|\\
     \notag  &\le&\sum\limits_{|k|=2}^{+\infty}|a_k^1||\theta|^k\\
       &=&\sum\limits_{|k|=2}|a_k^1||\theta|^k+\sum\limits_{|k|=3}^{+\infty}|a_k^1||\theta|^k.
    \end{eqnarray}
    Recall that $(|\theta_1|+|\theta_2|+\dots+|\theta_m|)^N=\sum_{|k|=N}\frac{N!}{k!}|\theta_1|^{k_1}\cdots |\theta_m|^{k_m}$, for $N\in\mathbb{N}.$ Moreover, $\frac{N!}{k!}\ge1$ in the sum. We will also make use of the known inequality $(\alpha+\beta)^p\le2^p(\alpha^p+\beta^p)$, where $\alpha,\beta\ge0$ and $p>0$. In fact, this last inequality can be used inductively for any finite number of summands. Combining these with (\ref{two sums}), we deduce that
    \begin{eqnarray*}
        |1-\langle\Psi(\theta),\Psi(0)\rangle|&\le&C\sum\limits_{|k|=2}\frac{2!}{k!}|\theta|^k+C\sum\limits_{N=3}^{+\infty}\epsilon^{-N}\sum\limits_{|k|=N}\frac{N!}{k!}|\theta|^k\\
        &\le&C||\theta||^2+C\sum\limits_{N=3}^{+\infty}\frac{(|\theta_1|+\dots+|\theta_m|)^N}{\epsilon^{N}}\\
        &\le&C||\theta||^2+C||\theta||^2\sum\limits_{N=3}^{+\infty}\frac{(|\theta_1|+\dots+|\theta_m|)^{N-2}}{\epsilon^N}.
    \end{eqnarray*}
    By potentially shrinking even more the domain where $\theta$ lies, we are allowed to assume that $|\theta_1|,|\theta_2|,\dots,|\theta_m|\le\frac{\epsilon^4}{m}$. In this way
    \begin{eqnarray*}
        \sum\limits_{N=3}^{+\infty}\frac{(|\theta_1|+|\theta_2|+\dots+|\theta_m|)^{N-2}}{\epsilon^N}&=&\sum\limits_{N=3}^{+\infty}\frac{\epsilon^{4N-8}}{\epsilon^N}\\
        &=&\sum\limits_{N=3}^{+\infty}\epsilon^{3N-8}<+\infty,
    \end{eqnarray*}
    since $\epsilon\in(0,1)$. All in all, we have proved that there exists proper $\delta>0$ such that
    $$|1-\langle\Psi(\theta),\Psi(0)\rangle|\le C||\theta||^2, \quad\text{for all }\theta\in\Delta(0,\delta).$$ 
    
    Returning to the general case, we have proved that
    $$|1-\langle\Psi(\theta),\Psi(\theta')\rangle|\le C||\theta-\theta'||^2, \quad\text{for all }\theta'\in\Delta(0,\delta)\text{ and }\theta\in\Delta(\theta',\delta).$$
    
    Obviously, identical arguments can be used to prove the exact same inequality for $\Phi$.
    
    We are finally ready for the last part of the proof which utilizes capacities. For the already defined measure $\nu$ on $N'=\Psi(\Delta(0,\delta))$, we have
    \begin{eqnarray*}
        I_\alpha[\nu]&=&\iint\limits_{N'}K_\alpha(|1-\langle\zeta,\eta\rangle|)d\nu(\zeta)d\nu(\eta)\\
        &=&\iint\limits_{\Delta(0,\delta)}K_\alpha(|1-\langle\Psi(\theta),\Psi(\theta')\rangle|)d\nu(\Psi(\theta))d\nu(\Psi(\theta'))\\
        &=&\iint\limits_{\Delta(0,\delta)}K_\alpha(|1-\langle\Psi(\theta),\Psi(\theta')\rangle|)d\mu(\Phi(\theta))d\mu(\Phi(\theta')).
    \end{eqnarray*}
    
    Trivially there exists $C>0$ so that $|1-\langle\Psi(\theta),\Psi(\theta')\rangle|\ge C,$ for all $\theta'\in \Delta(0,\delta), \theta\in\Delta(0,\delta)\setminus\Delta(\theta',\delta).$ Therefore,
    $$\int\limits_{\Delta(0,\delta)}\int\limits_{\Delta(0,\delta)\setminus\Delta(\theta',\delta)}K_\alpha(|1-\langle\Psi(\theta),\Psi(\theta')\rangle|)d\mu(\Phi(\theta))d\mu(\Phi(\theta'))<+\infty.$$
    Returning back, and via the positivity of the quantity we integrate, we obtain
    \begin{eqnarray*}
        I_\alpha[\nu]&\le&\int\limits_{\Delta(0,\delta)}\int\limits_{\Delta(\theta',\delta)\cap\Delta(0,\delta)}K_\alpha(C||\theta-\theta'||^2)d\mu(\Phi(\theta))d\mu(\Phi(\theta'))+C\\
        &\le&C\iint\limits_{\Delta(0,\delta)}K_\alpha(|1-\langle\Phi(\theta),\Phi(\theta')\rangle|)d\mu(\Phi(\theta))d\mu(\Phi(\theta'))+C\\
        &=&C\iint\limits_{M'}K_\alpha(|1-\langle\zeta,\eta\rangle|)d\mu(\zeta)d\mu(\eta)+C\\
        &=&CI_\alpha[\mu]+C<+\infty.
    \end{eqnarray*}
    Consequently, $I_\alpha[\nu]<+\infty$ and by extension $\textup{cap}_{\alpha}(N')>0$ which leads to $\textup{cap}_\alpha(N)>0$. Reversing the roles of $M$ and $N$, we get the desired equivalence.
\end{proof}

\subsection{Proof of Theorem \ref{non-cyclicity}}

\begin{proof}[Proof of Theorem \ref{non-cyclicity}]
    Let $\alpha>\frac{2n-(m-1)}{2}$. Then, as we said in the Introduction, the model polynomial $\pi_m$ is not cyclic. Moreover, by the explicit form of the set $\mathcal{Z}(\pi_m)\cap\SSS$, we may find a complex tangential Nash diffeomorphism $\Phi:(-1,1)^{m-1}\to\mathcal{Z}(\pi_m)\cap\SSS$, with $\mathcal{Z}(p)\cap\SSS$ being a Nash submanifold. In addition, by our hypothesis and Theorem \ref{zeroset}, there exists a Nash submanifold $N\subset\mathcal{Z}(p)\cap\SSS$ and a complex tangential Nash diffeomorphism $\Psi:(-1,1)^{m-1}\to N$. However, by \cite{Vavitsas2},  $\textup{cap}_\alpha(\mathcal{Z}(\pi_m)\cap\SSS)>0$ for $\alpha>\frac{2n-(m-1)}{2}.$  Therefore, all the necessary criteria are met and we can apply Proposition \ref{proposition} to get $\textup{cap}_\alpha(N)>0$. Immediately, this leads to $\textup{cap}_\alpha(\mathcal{Z}(p)\cap\SSS)>0$. By \cite[Theorem 16]{Vavitsas2}, we deduce that $p$ is not cyclic in $D_\alpha(\BB)$.
\end{proof}

\section*{Acknowledgements}
We would like to thank N. Chalmoukis, \L. Kosi\'{n}ski, T. Ransford and A. Sola for the helpful correspondence during the preparation of this work.

\bibliographystyle{plain}

\end{document}